\newtheorem{theorem}{Theorem}[section]
\newtheorem{proposition}[theorem]{Proposition}
\newtheorem{definition}[theorem]{Definition}
\theoremstyle{definition}
\newtheorem{remark}[theorem]{Remark}
\newtheorem{conjecture}[theorem]{Conjecture}
\theoremstyle{remark}
\newtheorem{example}[theorem]{Example}
\newcommand{\cC}{\mathcal{C}}
\newcommand{\cE}{\mathcal{E}}
\newcommand{\cF}{\mathcal{F}}
\newcommand{\cG}{\mathcal{G}}
\newcommand{\cL}{\mathcal{L}}
\newcommand{\cO}{\mathcal{O}}
\newcommand{\uu}{\mathbf{U}}
\newcommand{\gr}{\mathbf{G}}
\newcommand{\Gr}{\mathbf{Gr}}
\newcommand{\R}{\mathbf{R}}
\newcommand{\PP}{\mathbb{P}}
\DeclareMathOperator{\SL}{SL}
\DeclareMathOperator{\Hb}{H}
\DeclareMathOperator{\hsm}{h}
\DeclareMathOperator{\dt}{det}
\DeclareMathOperator{\length}{length}
\DeclareMathOperator{\cplx}{cplx}
\DeclareMathOperator{\reg}{reg}
\DeclareMathOperator{\syl}{Syl}
\DeclareMathOperator{\res}{Res}
\DeclareMathOperator{\pic}{Pic}
\newcommand{\sym}{\operatorname{Sym}}
\newcommand{\codim}{\operatorname{codim}}
\newcommand{\rk}{\operatorname{rk}}
\newcommand{\into}{\hookrightarrow}
\newcommand{\ladi}{\begin{lastadd}}
\newcommand{\ladf}{\end{lastadd}}
\newcommand{\lrei}{\begin{lastrem}}
\newcommand{\lref}{\end{lastrem}}
\newenvironment{lastadd}
{\cbstart\color{red}}
{\todo{red to remove}\cbend}
\newenvironment{lastrem}
{\cbstart\color{yellow}}
{\cbend}
\renewcommand*\env@matrix[1][\arraystretch]{%
  \edef\arraystretch{#1}%
  \hskip -\arraycolsep
  \let\@ifnextchar\new@ifnextchar
  \array{*\c@MaxMatrixCols c}}
\author{Dominic Bunnett}
\address{
Institut f\"ur Mathematik\\
Technische Universit\"at Berlin\\
\email{bunnett@math.tu-berlin.de}
}
\author{Hanieh Keneshlou}
\address{
Max-Planck Institute for Mathematics in Sciences, Leipzig\\
\email{hanieh.keneshlou@mis.mpg.de}
}
\title{Determinantal representations of the cubic discriminant}
\begin{document}
\maketitle
\begin{abstract}
We compute and study two determinantal representations of the discriminant of a cubic quaternary form. The first representation is the Chow form of the $2$-uple embedding of $\PP^3$ and is computed as the Pfaffian of the Chow form of a rank 2 Ulrich bundle on this Veronese variety. We then consider the determinantal representation described by Nanson. We investigate the geometric nature of cubic surfaces whose discriminant matrices satisfy certain rank conditions. As a special case of interest, we use certain minors of this matrix to suggest equations vanishing on the locus of $k$-nodal cubic surfaces.
\end{abstract}

\section*{Introduction}
Given a square system of homogeneous polynomial equations $f_0=\cdots= f_n=0$ of degrees $d_0,\ldots,d_n$ and in variables $x_0, \dots , x_n$, there is a unique  (up to a non-zero constant) polynomial $\res(f_0,\ldots, f_n)$, the \textit{resultant}, in the coefficients of the equations, whose vanishing is necessary and sufficient for the existence of non-trivial solutions for the system of equations. This is a polynomial invariant under the action of the special linear group $\SL(n+1)$. The resultant can be viewed as generalisation of the determinant in the case $d_0=\cdots=d_n=1$ where the resultant is nothing but the determinant of the coefficient matrix.

For a homogeneous polynomial $F$ of degree $d$, the \textit{discriminant}, denoted by $\Delta(F)$ is the resultant of first partial derivatives of $F$, that is
\begin{small}
\[\Delta(F) = \res(\frac{\partial F}{\partial x_0}, \cdots , \frac{\partial F}{\partial x_n}).\]
\end{small}
Therefore, the vanishing of $\Delta(F)$ assures the existence of singular points on the hypersurface $V(F)$ and is a polynomial in \begin{scriptsize}$\left( \begin{matrix} n+d \\d \end{matrix} \right)$\end{scriptsize} variables of degree \linebreak $(n+1)(d-1)^n$; see \cite[Example IX.1.6 (a)]{gkz}. We refer to \cite[Chapters 9 and 13]{gkz} for more details on the discriminant and its properties.

In this paper, we focus on determinantal representations of the discriminant of a quaternary cubic form which we refer to as the discriminant in the rest of the paper. In \cite{s61}, Salmon computed the discriminant of a cubic surface. This discriminant is a degree $32$ polynomial in the twenty coefficients of the general quaternary cubic form, although Salmon did not give it explicitly as the number of monomials is very large; it has at the very least 166,104 monomials \cite{kl19}. Note that Salmons original calculations contained a minor error, corrected by Edge \cite{e80}. Given the size, one requires different representations to be able to work with and study the discriminant. In this paper, we study two different ways the coefficients of a cubic can be organized into a matrix such that the determinant of that matrix is the discriminant.

The first representation we consider comes from the identification of the resultant of four quaternary quadrics with the Chow form of the Veronese embedding of $\PP^3$ in $\PP^9$ (Lemma \ref{lemma_res_chow}). The Chow form is a single polynomial defining the Chow divisor of this Veronese variety in the Grassmannian $\Gr(4,10)$ in Pl\"ucker coordinates. See \cite[Section III.2.B]{gkz} for a comprehensive introduction to Chow forms and Chow divisors. To compute this Chow form we use techniques of homological and exterior algebra developed in \cite{es03,efs03}.

Let us sketch the procedure. Let $\nu : \PP^m \into \PP^n$ be a Veronese embedding. The Chow form of a Veronese variety can be computed by finding a resolution of a sheaf on the Grassmannain supported on the Chow divisor and taking its determinant \cite[Chapter II]{mk76} and \cite[Appendix A]{gkz}. Such a sheaf is found by considering a sheaf on $\PP^n$ supported on $\PP^m$ and constructing a sheaf on the Grassmannian via the incidence correspondence \cite[Section 5]{es03}. However, up until the work Eisenbud and Schreyer \cite{es03}, computing these resolutions and their determinants had not been possible. In \cite{es03}, an effective method was given to compute the relevant resolutions, if one can find an Ulrich sheaf (Definition \ref{def_ulrich}) supported on $\nu(\PP^m) \subseteq \PP^n$. When the sheaf is Ulrich, the resolution is a two-term complex (the so-called Chow complex) and the determinant of the map in this complex is the Chow form raised to the power of the rank of the sheaf.

In the case of cubic surfaces, there exists an Ulrich sheaf and the map in the Chow complex is given by a $16 \times 16$ matrix with linear entries in $210$ Pl\"ucker coordinates whose Pfaffian provides the resultant. To retrieve the resultant from this matrix, one substitutes Pl\"ucker coordinates with the determinants of $4 \times 4$ matrices constructed from coefficients of the four quadrics.

The second representation is due to Nanson and provides the discriminant as the determinant of a $20\times 20$ matrix \cite{ns99}. This is a matrix with 16 rows of linear entries and 4 rows of quadratic entries. We investigate the geometry of the cubic surfaces lying on certain subspaces cut out by imposing rank conditions on this matrix. We provide evidence that certain minors of this matrix give equations vanishing on the locus of $k$-nodal cubics for $k$ up to $4$.

The paper is structured as follows. The first section deals with the computation of the discriminant by means of the resultant of four quaternary quadrics and the associated Chow form. We give a short, example driven exposition to the theory developed by Eisenbud and Schreyer, which we then utilise to compute the discriminant of a cubic quaternary form, which is given as the Pfaffian of a $16 \times 16$ matrix.

Section \ref{nanson} concerns the determinantal representation of the discriminant calculated by Nanson. Imposing rank conditions on this matrix, we look for stratification of the locus of singular cubic surfaces. In Section \ref{nodal}, we investigate the relationship between these strata and the locus of cubics with a given number of nodal singularities. 

Our results and conjectures rely on the computations performed by the computer algebra system \textit{Macaulay2} \cite{m2}, and using the supporting functions in \cite{bk}.
\vspace{-0.5cm}
\section*{Acknowledgements}
We would like to thank Michael Stillman and Frank-Olaf Schreyer for the invaluable conversations and help regarding the Macaulay2 computations. The work presented in this paper addresses the questions 4, 6, and 7 in \cite{bs} and we would like to thank Bernd Sturmfels for bringing these problems to our attention. We would also like to thank the anonymous referee whose comments improved the clarity of the material.
\vspace{-0.5cm}
\section{The discriminant as a Chow form}

In this section, we introduce all the necessary background material. We start with the construction of Chow assigning a \em Chow form \em to every subvariety in projective space and its extension to sheaves on projective space. We refer the reader to \cite[Chapter III]{gkz} for a comprehensive introduction to Chow forms of projective varieties and to \cite[Section 2]{mk76} for the extension to sheaves.

\subsection{Chow forms and Chow complexes}

Let $W$ be an $(n+1)$-dimensional vector space over a field $\mathbb{K}$, and $X\subset\PP^n=\PP(W)$ be a $k$-dimensional variety. The \em Chow divisor \em of $X\subseteq \PP^n$ is the hypersurface in the Grassmannian $\Gr = \Gr(k+1,n)$ of codimension $(k+1)$ linear subspaces of $\PP^n$, whose points are subspaces which meet $X$. The Chow divisor is denoted by $D_X \subseteq \Gr$ and the \em Chow form \em of $X$ is the defining equation of $D_X$ in Pl\"ucker coordinates \cite[Proposition III.2.1]{gkz}. Explicitly, let
\[\mathbb{I}_{k+1} = \{(x,L) \in \PP^n \times \Gr \, \, | \, \, x \in L\} \subseteq \PP^n \times \Gr\]
be the incidence correspondence and consider the projection maps
\[\PP^n \xlongleftarrow{\pi_1} \mathbb{I}_{k+1} \xlongrightarrow{\pi_2} \Gr,\]
then $D_X = \pi_2(\pi_1^{-1}(X))$. One can check that this is indeed a hypersurface \cite[Proposition III.2.2]{gkz}. The Chow divisor of a $k$-cycle $\sum_in_i\cdot [X_i]$ is defined to be $\sum_i n_i \cdot D_{X_i}$.

More generally, let $\cF$ be a sheaf on $\PP^n$ whose support $X$ is $k$-dimensional, and consider the associated $k$-cycle 
\[\sum_{Y}\length(\cO_{\PP^n,Y}\otimes \cF)\cdot [Y],\]
where the sum is taken over all $k$-dimensional components of $X$. The Chow divisor of $\cF$ is defined to be the corresponding sum of Chow divisors. For instance, if $\cF=\cO_{X}$, then the Chow divisor of $\cF$ is the Chow divisor of $X$; more generally, if $\cF$ is a vector bundle of rank $r$ on $X$, the Chow divisor of $\cF$ is r times that of $X$. As before, the Chow form of $\cF$ is the defining equation of the Chow divisor.

The question is then given a sheaf, supported on an irreducible variety $X$, how can one compute the Chow form.

Let $\cG := (\pi_2)_*\pi_1^*\cF$ which is a sheaf supported on $D_X$. In a letter to Mumford \cite{gro}, Grothendieck observed that there exists a locally free complex $\cC$ and a quasi-isomorphism
\[\cC \simeq \R\pi_{2*}(\pi_1^*\cF).\]
In particular, $\Hb^{0} \cC \cong \cG$ and $\Hb^{i}\cC \cong \R^i\pi_{2*}(\pi_1^*\cF)$. Such a complex is called a \em Chow complex \em of $\cF$ and the Chow form of $\cF$ is the determinant of $\cC$. The notion of the determinant of a complex was first introduced by Grothendieck for exactly this purpose and a full account of the construction is worked out by Knudsen and Mumford in \cite{mk76}.

Therefore, the computation of a Chow form turns to the computation of the determinant of a Chow complex. In the following section, we recall an explicit and canonical construction of Chow complexes due to Eisenbud and Schreyer \cite{es03} using the notion of Tate resolutions.

\subsection{Tate resolutions and Ulrich sheaves}
We recall the construction of Tate resolutions for a coherent sheaf. For a detailed reference see \cite{efs03}.

As before, let $W$ be an $(n+1)$-dimensional vector space over $\mathbb{K}$ with basis $x_0,\ldots,x_n$ and let $S=\sym W=K[x_0,\ldots,x_n]$ be the symmetric algebra of $W$. Moreover, let $V=W^\vee$ be the dual vector space with basis $e_0,\ldots,e_n$, and $E=\bigoplus_{i=0}^{n}\bigwedge^{i}V$ be the exterior algebra over $V$. The gradings on $S$ and $E$ are given by $\deg(x_i)=1$ and $\deg(e_i)=-1$.

There is a pair of adjoint functors, the BGG correspondence \linebreak \cite{bgg}, between the categories of complexes of finitely generated graded modules over $S$ and $E$
\[\cplx(S) \xrightleftharpoons[\quad\, L \,\quad]{R} \cplx(E).\]
To a graded $S$-module $M=\bigoplus_{d} M_d$ we associate the following complex of free $E$-modules
\begin{equation}\label{eq:R-diff}
R(M): \cdots\longrightarrow E \otimes M_d \xlongrightarrow{\phi_d} E\otimes M_{d+1}\longrightarrow\cdots
\end{equation}
with differential map $\phi_d=\sum_{i=0}^{n}e_i\otimes x_i$. Similarly, for a graded $E$-module $P=\bigoplus_{j} P_j$, one builds the following complex of free $S$-modules
$$ L(P): \cdots\longrightarrow S\otimes P_j \xlongrightarrow{\phi_j} S\otimes P_{j-1}\longrightarrow\cdots $$
where $\phi_j=\sum_{i=0}^{n}x_i\otimes e_i$. 

Now, let $\mathcal{F}=\widetilde{M}$ be sheafification of a finitely generated graded $S$-module and let $s=\reg(M)$ denote the Castelnouvo-Mumford regularity of $M$. With the above notation, set
\[ P^s:=\ker(E\otimes M_s \longrightarrow E\otimes M_{s+1})\]
and let 
\[ \cdots \longrightarrow T^{s-2}\longrightarrow T^{s-1}\longrightarrow P^s\longrightarrow 0\]
be a relative projective resolution of $P^s$. Adjoining this complex with the truncated complex $R(M)_{>s}$, one obtains \textit{a Tate resolution} of $\mathcal{F}$:
\[ T(\mathcal{F}):\quad  \cdots \longrightarrow T^{s-2}\longrightarrow T^{s-1}\longrightarrow E\otimes M_s \longrightarrow E\otimes M_{s+1}\longrightarrow \cdots \]
It is proved in \cite[Theorem 4.1]{efs03} that the $k$-th term in the Tate resolution is isomorphic to
\[T^k(\mathcal{F})=\bigoplus_{j=0}^n\Hb^j(\mathcal{F}(k-j))\otimes E(j-k).\]
Here one must be careful of possibly confusing notation; $\cF(i)=\cF \otimes \cO_{\PP^n}(i)$ and $E(i)$ represents a shift in the grading by $i$. Tate resolutions of a coherent sheaf $\cF$ can be defined in more general context (see \cite[Section 3]{es08}) and are closely related to the \em Beilinson monad \em \cite{b78}. Indeed, one can obtain the Beilinson monad, and hence the sheaf $\cF$, from Tate resolutions \cite[Theorem 6.1]{efs03}.

Let us explain how to extract a Chow complex from the Tate resolution. We can define an additive functor $\uu_{k+1}$ from graded free modules over $E$ to locally free sheaves on $\Gr$ by sending $E(p)$ to $\uu_{k+1}(E(p)) = \bigwedge^p U$, where $U \subseteq \cO_{\Gr} \otimes W $ is the tautological subbundle of rank $k+1$. A map $\eta : E(q) \rightarrow E(q-p)$ is represented by an element $\alpha \in \bigwedge^{p} V$ which induces a map $\bigwedge^p W \rightarrow\mathbb{K}$ by contraction and from that one obtains the map $\uu_{k+1}(\eta) : \bigwedge^q U \rightarrow \bigwedge^{q-p}U$.

Note that $\uu_{k+1}(E(p)) = 0$ unless $0 \leq p \leq \rk U = k+1$ and thus the complex
$$\uu_{k+1}(\cF) := \uu_{k+1}(T(\cF))$$
is a bounded complex of locally free sheaves. Then we can apply \cite[Theorem 1.2]{es03}, which says that
$$\uu_{k+1}(\cF)\simeq \R\pi_{2*}(\pi_1^*\cF)$$
and thus $\uu_{k+1}(\cF)$ gives a chow complex of $\cF$.

The most accessible formulas for the Chow form occur when the complex has only one non-trivial map $\Psi$:
\[ \cdots \longrightarrow 0 \longrightarrow 0\longrightarrow C^{-1}\xlongrightarrow{\Psi}C^0\longrightarrow 0\longrightarrow \cdots .\]
In this case, the Chow form of $\cF$ is given by the determinant of $\Psi$. Moreover, if $\cF$ has rank $2$ and the map $\Psi$ is skew-symmetric (in the sense of \cite[Section 3.1]{es03}), the Pfaffian of $\Psi$ computes the Chow form of the support of $\cF$. This leads to the notion of Ulrich sheaves, which possess the desired two term Chow complex. These are sheaves with no intermediate cohomologies:

\begin{definition}\label{def_ulrich}
A coherent sheaf $\cF$ with $k$-dimensional support on projective space $\PP^n$ is called an Ulrich sheaf if it satisfies the following three vanishing conditions

\begin{enumerate}
\item $\Hb^i\cF(j)=0$ for $1\leq i \leq k-1$ and for all $j$,
\item  $\Hb^k\cF(l)=0$ for all $l \geq -k$, and
\item $\Hb^0\cF(j)=0$ for $j<0$.
\end{enumerate}
\end{definition}
One can collect the dimension of all cohomology groups of all twists of a sheaf in a so-called Betti diagram. In particular, the Betti diagram of an Ulrich sheaf $\cF$ is of the following form:
\begin{center}\small{
 \begin{tabular}{c c c c c c c c}
$\cdots$ & $h^k\cF(-k-3)$ & $ h^k\cF(-k-2) $& $h^k\cF(-k-1)$ & $0$ & $0$& $ 0 $ &\\  [0.5ex]
    & $0$ & $ 0 $&$0$ & $0$ & $0$& $ 0 $&\\ 
    &$\vdots$ & $\vdots$ & $ \vdots $& $\vdots$ & $\vdots$ &$\vdots$ & \\
    &$ 0 $ & $ 0 $ & $ 0 $ & $0$ & $0$ &$0$& \\
    & $0$ & $0$ & $ 0 $& $h^0\cF$ & $h^0\cF(1)$ &$h^0\cF(2)$ & $ \cdots $ \\
\end{tabular}}
\end{center}
It follows from the Betti diagram above that the Chow complex $\uu(\cF)$ is given by a $h^0\cF \times h^k\cF(-k-1)$ matrix whose entries are linear forms in the Pl\"ucker coordinates on $\Gr$.

\begin{example}\label{ex_ulrich_curves}
Consider the $d$-uple Veronese embedding $\nu : \PP^1 \hookrightarrow \PP^d$. Let $\cL = \cO_{\PP^1}(-1)$, we claim that $\cF := (\nu_*\cL)(1)$ is an Ulrich sheaf for $C = \nu(\PP^1)$. Indeed, using the projection formula we have that $\cF(j-i) = \nu_*(\cO_{\PP^1}(d-id+jd-1))$. Then using the fact that $h^0(\PP^1,\cO_{\PP^1}(l)) = l+1$ and by applying Serre duality, we have that the Betti table of $\cF$ is as follows.
\begin{center}
 \begin{tabular}{c|c c c c c c c c c}
  & $\cdots$ & $-3$ & $ -2 $& $-1$&$0$ & $1$ &  $2$&$ \ldots $\\
  [0.5ex]
 \hline
$1$ & $***$ & $3d$ & $ 2d $& $d$&$.$ & $.$ &  $.$& $\ldots $\\ 
$0$ & $\cdots$ & $.$ & $ .$& $.$&$d$ & $2d$ &  $3d$& $***$ \\ 
\end{tabular}
\end{center}
Note that in the above table (and in the rest of the article), we have replaced 0 by  ` $\cdot$ ' for clarity. Thus it follows that $\cF$ is an Ulrich sheaf as claimed.
\end{example}

\subsection{The cubic discriminant as a Chow form}\label{matrix}

Let $k$ and $d$ be two positive integers, $W = H^0(\PP^k , \cO_{\PP^k}(d))$ and $X = \PP^k \subseteq \PP(W) = \PP^n$ be the $d$-uple Veronese embedding. A hyperplane in $\PP^n$ may be regarded as a form of degree $d$ on $X$. Moreover, if $k+1$ hyperplanes have common zeros on $X$, then the corresponding forms have a common zero. If the $k+1$ planes do have a common zero on $X$, we may intersect them and we obtain an element of the Chow divisor $D_X \subseteq \Gr$. This proves the following:

\begin{lemma}\label{lemma_res_chow}
The Chow form of the Veronese embedding of $\PP^k \subseteq \PP^n$ is the resultant of $k+1$ forms of degree $d$ in $k+1$ variables.
\end{lemma}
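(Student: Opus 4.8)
The plan is to turn the three-sentence geometric sketch preceding the statement into an identity of polynomials. Set-theoretically the argument is immediate---a codimension-$(k+1)$ plane meets $X$ precisely when the associated forms share a zero, i.e.\ when their resultant vanishes---so the real content is to explain why the resultant, which is a priori a polynomial in the coefficients of the forms, descends to a polynomial in the Pl\"ucker coordinates on $\Gr$, and why that polynomial is a constant multiple of the Chow form rather than merely some power of it.

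First I would fix the dictionary between forms and linear data. Because $\nu\colon \PP^k \into \PP^n$ is the complete $d$-uple embedding, pullback gives an isomorphism $\nu^*\colon \Hb^0(\PP^n,\cO(1)) \xrightarrow{\sim} \Hb^0(\PP^k,\cO(d)) = W$, under which a hyperplane $H \subseteq \PP^n$ corresponds to a degree-$d$ form $f_H$ with $\nu(p)\in H \iff f_H(p)=0$. Every point $L\in\Gr$ is the common zero locus of a $(k+1)$-dimensional space of linear forms; choosing a basis $f_0,\dots,f_k$, cutting out hyperplanes $H_0,\dots,H_k$ with $L=\bigcap_i H_i$, identifies $L$ with the tuple $(f_0,\dots,f_k)$ up to the change-of-basis action of $\mathrm{GL}(k+1)$, and the Pl\"ucker coordinates $p_I(L)$ are exactly the maximal minors of the coefficient matrix of $(f_0,\dots,f_k)$. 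The geometric step then reads: $L\cap X\neq\emptyset$ iff some $\nu(p)$ lies in $L$, i.e.\ $f_i(p)=0$ for all $i$, iff $f_0,\dots,f_k$ have a common zero in $\PP^k$, iff $\res(f_0,\dots,f_k)=0$; since $D_X=\pi_2(\pi_1^{-1}(X))$ is the set of planes meeting $X$, the Chow divisor and the zero locus of the resultant coincide.

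To upgrade this to an equality of forms I would invoke invariance. The resultant of $k+1$ forms of degree $d$ is homogeneous of degree $d^k$ in the coefficients of each form and transforms by $\res(g\cdot\underline f)=(\det g)^{d^k}\,\res(\underline f)$ for $g\in\mathrm{GL}(k+1)$; by the first fundamental theorem of invariant theory, such a semi-invariant of a $(k+1)\times(n+1)$ matrix is a polynomial of degree $d^k$ in the maximal minors. Thus the resultant descends to a genuine polynomial $R$ of Pl\"ucker degree $d^k$ on $\Gr$, which vanishes on $D_X$. Since $X=\nu(\PP^k)$ is irreducible the divisor $D_X$ is irreducible, and its reduced equation---the Chow form $R_X$---also has Pl\"ucker degree $\deg X = d^k$ (the Chow form of a variety has Pl\"ucker degree equal to the degree of the variety). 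Comparing, $R=c\,R_X$ for a nonzero constant $c$, which is the assertion.

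I expect the only real obstacle to be the descent step: one must verify that the resultant is a semi-invariant of exactly the weight $(\det)^{d^k}$ and then apply the first fundamental theorem to rewrite it in the maximal minors, as this is what legitimises comparing an object built from the coefficients with one built from Pl\"ucker coordinates. The accompanying degree bookkeeping---that the Pl\"ucker degree $d^k$ of the Chow form matches the invariant weight $d^k$ of the resultant---is what promotes the set-theoretic coincidence of loci to genuine proportionality, ruling out the a priori possibility of an extra power.
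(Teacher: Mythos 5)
Your proof is correct, and its geometric core is exactly the paper's argument: the paper ``proves'' the lemma solely by the three-sentence observation that hyperplanes in $\PP^n$ pull back to degree-$d$ forms on $\PP^k$, so that a codimension-$(k{+}1)$ plane meets $X=\nu(\PP^k)$ precisely when the corresponding $k{+}1$ forms have a common zero. Where you genuinely go beyond the paper is in the second half: the paper stops at the set-theoretic identification of $D_X$ with the vanishing locus of the resultant and never addresses why the resultant --- a polynomial in the $(k{+}1)(n{+}1)$ coefficients --- is literally a polynomial in Pl\"ucker coordinates equal to the Chow form on the nose, rather than a power of it or a multiple with extraneous factors. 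Your descent via the semi-invariance $\res(g\cdot\underline f)=(\det g)^{d^k}\res(\underline f)$ and the first fundamental theorem, followed by matching the Pl\"ucker degree $d^k$ of the descended resultant against $\deg X=d^k$ for the Chow form, is the standard way to close this gap (it is essentially the argument in GKZ, Ch.~3--4), and it is what legitimises the substitution $[ij\cdots]=\det(\cdots)$ that the paper later performs when recovering the resultant from the $16\times16$ Pfaffian. The only points worth making explicit are that $R\not\equiv 0$ (a general codimension-$(k{+}1)$ plane misses the $k$-dimensional $X$, i.e.\ $D_X$ is a proper hypersurface) and that the irreducibility of $D_X$ gives $R_X\mid R$ before the degree comparison; both are immediate. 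In short: same route as the paper, but you supply the half of the proof the paper leaves implicit.
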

We consider the example of the twisted cubic curve in $\PP^3$ to demonstrate the heavy machinery developed in the previous sections. In this case, the resultant corresponding to this Chow form can be computed using the  classical method of the Sylvester matrix and we compare the two different approaches. All the computations done below, including the comparison between the two methods, were carried out in Macaulay2 and can be found in \cite{bk}. 

\begin{example}
Consider the vector space of binary cubic forms $W = H^0(\PP^1,\cO_{\PP^1}(3))$ and let $C \subset \PP(W) \cong \PP^3$ be the twisted cubic curve; that is, the image of the $3$-uple embedding of $\PP^1$ given by
\begin{align*}
 \nu:   \PP^1& \hookrightarrow  \PP^3\\
 (x_0:x_1)& \mapsto (x_0^3:x_0^2x_1:x_0x_1^2:x_1^3).
\end{align*}
Let $\mathcal{L}=\cO_{\PP^1}(-1)$ such that $\cF=(\nu_*\mathcal{L})(1)$ is an Ulrich sheaf on $C$ by Example \ref{ex_ulrich_curves}. Subbing $d=3$ into the table from Example \ref{ex_ulrich_curves} we get
\begin{center}
 \begin{tabular}{c|c c c c c c c c c}
  & $\cdots$ & $-3$ & $ -2 $& $-1$&$0$ & $1$ &  $2$&$ \ldots $\\
  [0.5ex]
 \hline
$1$ & $***$ & $9$ & $ 6 $& $3$&$.$ & $.$ &  $.$& $\ldots $\\ 
$0$ & $\cdots$ & $.$ & $ .$& $.$&$3$ & $6$ &  $9$& $***$. \\ 
\end{tabular}
\end{center}
One sees immediately that $\cF$ is 0-regular. Therefore the Tate resolution of $\cF$ is of the form
\[T(\cF):\quad \cdots  \longrightarrow E(3)^{6}\xlongrightarrow{\phi_{-2}}E(2)^{3} \xlongrightarrow{\phi_{-1}}E^{3} \xlongrightarrow{\phi_0} E(-1)^{6} \longrightarrow \cdots .\]
where $E = \bigwedge W^\vee$ is the exterior algebra associated to the dual space of $W$. The map 
\[\phi_0 : H^0(\PP^1,\cO_{\PP^1}(2)) \otimes E \longrightarrow H^0(\PP^1,\cO_{\PP^1}(5)) \otimes E(-1)\]
is the differential map of the $R$-complex $(\ref{eq:R-diff})$, and is calculated explicitly via the multiplication map
\[H^0(\PP^1 , \cO_{\PP^1}(2))\otimes W\longrightarrow H^0(\PP^1 , \cO_{\PP^1}(5)).\]
Let $S=\mathbb{K}[x_0,x_1]$ be the homogeneous coordinate ring of $\PP^1$ so that
\begin{align*}
H^0(\PP^1,\cO_{\PP^1}(2)) : &\quad x_0^2,x_0x_1,x_1^2\\
H^0(\PP^1,\cO_{\PP^1}(3)) : & \quad x_0^3,x_0^2x_1,x_0x_1^2,x_1^3\\
H^0(\PP^1,\cO_{\PP^1}(5)) : & \quad x_0^5,x_0^4x_1, \dots , x_1^5
\end{align*}
are the monomial bases for the vector spaces in the multiplication map. Denote by $e_0, \dots , e_3$ the basis of $W^\vee$ dual to the monomial basis. Writing the matrix of $\phi_0$ with respect to the basis given by the $e_i$ yields the following:
$$\phi_0=
\left( \begin{array}{rrrrrr}
e_0 & 0 & 0  \\
e_1 & e_0 & 0  \\
e_2 & e_1 & e_0  \\
e_3 & e_2 & e_1  \\
0 & e_3 & e_2  \\
0 & 0 & e_3 \\
\end{array}\right).$$
To compute $\phi_{-1}$, one computes the syzygy matrix of $\phi_0$. This can be done either by hand or quickly in Macaulay2 (see \cite{bk}):
\[\phi_{-1}=
\begin{pmatrix}
 e_0e_3& e_0  e_2 & e_0e_1 \\
e_1e_3& e_1e_2+e_0e_3\ & e_0e_2\\
e_2e_3& e_1 e_3\ & e_0e_3 \\
\end{pmatrix}.\]
If we interpret the $2$-forms $e_ie_j$'s as the Pl\"ucker coordinates
 $$[ij]:=e_i\wedge e_j,\quad i,j\in \lbrace 0,\ldots,3 \rbrace$$
 of the Grassmannian $\Gr(2,W)$, under this identification, the map $\phi_{-1}$ defines the non-zero matrix in the two-term Chow complex of $\cF$ 
\[0\longrightarrow \cO_{\gr}(-1)^{3}\longrightarrow \cO_{\gr}^{3}\longrightarrow 0,\]
obtained by applying the functor $\uu_2$ to the Tate resolution $T(\cF)$, explicitly given by
\[\uu_2(\phi_{-1})=
\begin{pmatrix}[]
[03] & [02] & [01] \\
[13] & [12]+[03] & [02]\\
[23] & [13] & [03] \\
\end{pmatrix}.
\]
The determinant of this $3\times 3$ matrix gives the Chow form of $C$. In particular,
the resultant of two binary cubic forms
\begin{align*}
f&=a_0x_0^3+a_1 x_0^2x_1+a_2 x_0x_1^2+a_3x_1^3\\
g&=b_0x_0^3+b_1 x_0^2x_1+b_2 x_0x_1^2+b_3x_1^3    
\end{align*}
is obtained via the substitution of the Pl\"ucker coordinates with
\[[ij]=\det \left( \begin{array}{rrrr}
a_i & b_i \\
a_j& b_j\\
\end{array}\right), \quad i,j\in \lbrace 0,\ldots,3 \rbrace\]
in the determinant of the matrix. This gives us the following expression of the Chow form of $C$:
\[{a}_{0}^{3}{b}_{3}^{3}-{a}_{0}^{2}{a}_{1}{b}_{2}{b}_{3}^{2}+{a}_{0}{a}_{1}^{2}{b}_{1}{b}_{3}^{2}-{a}_{1}^{3}{b}_{0}{b}_{3}^{2}+{a}_{0}^{2}{a}_{2}{b}_{2}^{2}{b}_{3}+ \cdots -{a}_{3}^{3}{b}_{0}^{3}.
\]
Comparing this expression with the determinant of the Sylvester matrix
\[\syl(f,g) = \begin{pmatrix}
 a_0 & a_1 & a_2 & a_3 & 0 & 0 \\
 0 & a_0 & a_1 & a_2 & a_3 & 0\\
 0& 0 & a_0 & a_1 & a_2 & a_3 \\
 b_0 & b_1 & b_2 & b_3 & 0 & 0 \\
 0 & b_0 & b_1 & b_2 & b_3 & 0\\
 0& 0 & b_0 & b_1 & b_2 & b_3 \\
 
\end{pmatrix},\]
and indeed the two results agree. See (\cite[Example]{bk}) for the explicit computations of this example in Macaulay2.
\end{example}

Now, as our main case of study, the Chow form of the 2-uple embedding $\nu : \PP^3 \hookrightarrow \PP^9 $ is the resultant of four quadrics in $4$ variables. In this way, the discriminant of a cubic surface can be computed via the Chow form of some Ulrich bundles on $\PP^9$ supported on $\nu(\PP^3)$.

Let $\cE$ be the \em Null-Correlation bundle \em on $\PP^3$, defined to be the cokernel of a monomorphism $\pi$ fitting into the short exact sequence
\[0\longrightarrow \mathcal{O}_{\PP^3}(-1)\xlongrightarrow{\pi}\Omega_{\PP^3}(1)\longrightarrow \mathcal{E}\longrightarrow 0.\quad (\ast)\]

Let $\nu : \PP^3 \hookrightarrow X\subset\PP^9$ be the 2-uple embedding map. We define the sheaf $\cG := \cE(2)$ and consider the sheaf $\cF := \nu_*\cG$ on $\PP^9$ supported on $X$.

\begin{theorem}\cite[Theorem 5.11]{es03}
Keep the notation of above. Then $\cF$ is, up to automorphisms of $\PP^3$, the unique rank 2 Ulrich bundle on the $2$-uple embedding of  $\PP^3$. 
\end{theorem}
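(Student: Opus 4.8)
The plan is to split the statement into existence (that $\cF$ is a rank $2$ Ulrich bundle) and uniqueness, transporting everything through the isomorphism $\nu : \PP^3 \xrightarrow{\sim} X$ so that all cohomology is computed on $\PP^3$. Since $\cE$ is the cokernel of the rank-one subsheaf $\cO_{\PP^3}(-1)$ inside the rank-three bundle $\Omega_{\PP^3}(1)$, it has rank $2$; hence $\cG=\cE(2)$ and $\cF=\nu_*\cG$ have rank $2$. Because $\nu$ is a closed immersion with $\nu^*\cO_{\PP^9}(1)=\cO_{\PP^3}(2)$, the projection formula gives $H^i(\cF(j))=H^i(\PP^3,\cE(2+2j))$ for all $i,j$, so the three conditions of Definition \ref{def_ulrich} (with $k=3$) reduce to cohomological vanishings for the \emph{even} twists of $\cE$.

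First I would compute the full cohomology table of $\cE$. Twisting $(\ast)$ by $\cO(m)$ and using the Euler sequence (equivalently Bott's formula) for $\Omega_{\PP^3}(m+1)$, together with the self-duality $\cE^\vee\cong\cE$ (which holds since $c_1(\cE)=c_1(\Omega_{\PP^3}(1))-c_1(\cO(-1))=0$) and Serre duality on $\PP^3$, one finds that $H^0(\cE(m))\neq 0$ iff $m\geq 1$, that $H^1(\cE(m))\neq 0$ iff $m=-1$, that $H^2(\cE(m))\neq 0$ iff $m=-3$, and that $H^3(\cE(m))\neq 0$ iff $m\leq -5$. The key observation is that every nonzero intermediate cohomology of $\cE$ sits in an \emph{odd} twist, while the identity $H^i(\cF(j))=H^i(\cE(2+2j))$ only ever probes even twists. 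Verifying Definition \ref{def_ulrich} is then immediate: $H^1$ and $H^2$ vanish on all even twists (condition (1)); $H^3(\cE(2+2l))=0$ whenever $2+2l\geq -4$, i.e. for $l\geq -3$ (condition (2)); and $H^0(\cE(2+2j))=0$ whenever $2+2j\leq 0$, i.e. for $j<0$ (condition (3)). This proves $\cF$ is Ulrich of rank $2$.

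For uniqueness, let $\cF'$ be any rank $2$ Ulrich bundle on $X$. Since $\nu$ is an isomorphism onto $X$, we may write $\cF'=\nu_*\cG'$ for a rank $2$ bundle $\cG'$ on $\PP^3$, and set $\cE':=\cG'(-2)$. The Ulrich property forces $\cF'(-1),\cF'(-2),\cF'(-3)$ to have no cohomology at all, i.e. $H^*(\PP^3,\cG'(m))=0$ for $m=-2,-4,-6$, whence $\chi(\cG'(m))=0$ at these three values. As $\chi(\cG'(m))$ is a cubic in $m$ with leading coefficient $\tfrac13$ (determined by $\rk\cG'=2$), these three roots give $\chi(\cG'(m))=\tfrac13(m+2)(m+4)(m+6)$, and comparing its coefficients with Hirzebruch--Riemann--Roch pins down $c_1(\cG')=4$, $c_2(\cG')=5$; equivalently $c_1(\cE')=0$, $c_2(\cE')=1$. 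Moreover $H^0(\cE')=H^0(\cF'(-1))=0$ by condition (3), so $\cE'$ is a \emph{stable} rank $2$ bundle on $\PP^3$ with $(c_1,c_2)=(0,1)$.

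It then remains to identify such bundles, and here I would invoke the classical classification: every stable rank $2$ bundle on $\PP^3$ with Chern classes $(0,1)$ is a null-correlation bundle, and these are parametrised by the nondegenerate alternating forms on $\mathbb{K}^4$ up to scale, i.e. by $\PP^5\setminus\Gr(2,4)$. Since any two nondegenerate alternating forms on a $4$-dimensional space are equivalent under $\mathrm{GL}_4$, this parameter space is a single orbit for the induced action of $\at(\PP^3)=\pgl_4$; hence $\cE'\cong\cE$ and $\cF'\cong\cF$ after an automorphism of $\PP^3$. The existence half is routine once the cohomology table of $\cE$ is in hand, and the main obstacle is the uniqueness: one must be careful that the Ulrich conditions constrain only the even twists of $\cG'$ — just enough, via the three Euler-characteristic roots, to fix the Chern classes and stability — and then lean on the external classification of $(0,1)$-bundles together with the transitivity of $\pgl_4$ on null correlations to conclude.
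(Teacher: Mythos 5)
The paper does not prove this statement at all: it is quoted verbatim as \cite[Theorem 5.11]{es03}, so the only ``proof'' in the text is the citation to Eisenbud--Schreyer. Your write-up is therefore doing strictly more than the paper, and as far as I can check it is correct. The existence half is clean: the reduction $H^i(\cF(j))=H^i(\PP^3,\cE(2+2j))$ via the projection formula is right, the cohomology table of the null-correlation bundle (namely $H^1\neq 0$ only at twist $-1$, $H^2\neq 0$ only at twist $-3$, by the defining sequence $(\ast)$, Bott vanishing for $\Omega_{\PP^3}$, and self-duality $\cE^\vee\cong\cE$ from $c_1=0$) is correct, and your parity observation --- that the Ulrich conditions only probe even twists of $\cE$ while all intermediate cohomology lives in odd twists --- is exactly the right mechanism. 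The numbers in conditions (2) and (3) check out. The uniqueness half is also sound: the three total-cohomology vanishings at twists $-1,-2,-3$ do follow from Definition \ref{def_ulrich}, they force $\chi(\cG'(m))=\tfrac13(m+2)(m+4)(m+6)$, and comparing with Hirzebruch--Riemann--Roch (using $c_3=0$ for rank $2$) gives $(c_1,c_2)(\cE')=(0,1)$; together with $H^0(\cE')=0$, which for $c_1=0$ is equivalent to stability, this reduces the claim to the classical fact that the only stable rank-$2$ bundles on $\PP^3$ with $(c_1,c_2)=(0,1)$ are null-correlation bundles, on which $\pgl_4$ acts transitively. That last classification is the one genuinely external input you lean on (it is standard, e.g.\ Okonek--Schneider--Spindler, but is usually stated over an algebraically closed field, so you should record that hypothesis); with it granted, your argument is complete and is essentially a reconstruction of the Eisenbud--Schreyer proof that the paper chose to outsource.
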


\begin{remark}
In the best case scenario, we would find a rank 1 Ulrich sheaf on $\PP^9$ supported on $\PP^3$, however there can be no such rank 1 bundle. Indeed, by \cite[Theorem III.5.1]{h77}, line bundles on $\PP^3$ have no intermediate cohomology; that is, $H^i(\PP^3,\cL) = 0$ for $0<i<3$ for every $\cL \in \pic(\PP^3)$. However, \cite[Proposition 5.1]{es03} implies that if $\nu_*\cL$ were an Ulrich sheaf for some $\cL \in \pic(\PP^3)$, then $H^1(\cL(-1)) \neq 0$.
\end{remark}

The Betti diagram of the Tate resolution of $\mathcal{G}$ can be found in Figure \ref{fig:bettiTable}.
\begin{figure}[ht]
\begin{center}
 \begin{tabular}{c|c c c c c c c c c c c c}
  & $\cdots$ & $-3$ & $ -2 $& $-1$&$0$ & $1$ &  $2$& $ 3 $ & $4$& $5$ & $6$&$ \ldots $\\ [0.5ex]
 \hline
$3$ & $***$ & $64$ & $ 35 $& $16$ & $5$ & $.$& $ . $ &$ . $&$ . $&$ . $&$ . $&$ \ldots $\\ 
 $2$& $\ldots$ & $.$ & $ . $&$.$ & $.$ & $1$& $ . $& $ . $& $ . $& $ . $&$ . $&$ \ldots $\\ 
$1$ & $\ldots$ &$.$ & $.$ & $ . $& $.$ & $.$ &$1$ &$ . $&$ . $&$ . $&$ . $& $\ldots $\\  
$0$ & $\ldots$ &$.$ & $.$ & $ . $& $.$ & $.$ &$.$ &$ 5 $&$ 16 $&$ 35 $&$ 64 $&$ *** $\\
\end{tabular}
\caption{Betti diagram of the Tate resolution of $\cG$}
\label{fig:bettiTable}
\end{center}
\end{figure}
Here the integer in the $k$-th column and the $j$-th row stands for $\hsm^j\mathcal{G}(k-j)$.

Using the table in Figure \ref{fig:bettiTable}, one checks that $\mathcal{F}$ is $0$-regular. Therefore, the Tate resolution of $\mathcal{F}$ is of the form 
\[T(\cF):\quad \cdots  \longrightarrow E(4)^{16} \longrightarrow E^{16} \xlongrightarrow{\phi_0} E(-1)^{64} \longrightarrow \cdots .\]
where $E$ is the exterior algebra associated to the dual vector space of $W=H^0(\PP^3 , \cO_{\PP^3}(2))$. Applying the functor $\uu_4$ to $T(\cF)$, we obtain the two-term complex
\[0\longrightarrow \cO_{\gr}(-1)^{16}\xlongrightarrow{\Psi} \cO_{\gr}^{16}\longrightarrow 0,\]
whose Pfaffian gives the Chow form of the $2$-uple embedding of $\PP^3$ in $210$ Pl\"ucker coordinates of $\Gr(4,W)$.

We now go through the computation of the map $\Psi$. Following the approach of the previous example, the map $\phi_0$ here comes from the multiplication map
\[\Hb^0\mathcal{E}(2)\otimes W\longrightarrow \Hb^0\mathcal{E}(4).\]
One can use the short exact sequence $(\ast)$ and the twists of the Euler sequence to compute a basis for the spaces global sections of $\cE(d)$ and from that the map $\phi_0$. However, as an alternative and easier way in this case, we construct a Tate resolution of $\cG$ in a broader sense, and then we compute the map $\phi_0$ from that. 

Let $E'$ be the exterior algebra associated to the $4$-dimensional vector space $W'=H^0(\PP^3 , \cO_{\PP^3}(1))$. We start with a non-degenerate quadratic form in $E'$ defining the middle $1\times 1$ matrix in a Tate resolution of $\cG$. Resolving such a form gives the tail of the Tate resolution, so that $\phi_0$ can be computed from the composition of the two linear maps 
\[T(\cG): \cdots\longrightarrow E'(4)^{16}\longrightarrow E'(5)^{35}\longrightarrow E'(6)^{64}\longrightarrow \cdots.\]
The first syzygy matrix of $\phi_0$ (possibly after a base change) gives the desired $16\times 16$ skew-symmetric matrix $\Psi$ with linear entries in Pl\"ucker coordinates. See (\cite[Resultant1]{bk}) for the explicit computation of this matrix, and \cite{bkb} for the print out of it in the pdf file \texttt{Matrix16}. See Figure \ref{fig:matrix} for the top left hand corner of the matrix.

The Pfaffian of $\Psi$ is the resultant of four quaternary quadratics $Q_i$, $i=1,2,3,4$ with $10$ coefficients in $a,b,c,d$ all sorted in a fixed ordering of the $10$ squared monomials 
$$x_0^2,\ x_0x_1,\ x_0x_2,\ x_0x_3,\ x_1^2,\ x_1x_2, x_1x_3,\ x_2^2,\ x_2x_3,\ x_3^2,$$
after substituting the Pl\"ucker coordinates with  
\[[ijkl]=\det 
\left( \begin{array}{rrrr}
a_i & b_i & c_i & d_i \\
a_j & b_j & c_j & d_j \\
a_k & b_k & c_k & d_k \\
a_l & b_l & c_l & d_l \\
\end{array}\right).\]

To compute the discriminant of a cubic quaternary form, we take each of the partial derivatives as one of the quadrics. The determinant of the $16 \times 16$ matrix is of degree 64, hence the Pfaffian is of degree 32, the expected degree of the discriminant as computed by Salmon \cite{s61}. See \cite[\texttt{Matrix20.m2}]{bkb} for the presentation of this matrix in terms of $20$ coefficients of the general cubic form.
\begin{figure}[ht]
    \centering \tiny
$  \begin{pmatrix}
  \begin{matrix}[1.5]
  0 & 4[3689] & 2[1489] + 4[5689] & 2[3679] + [0489] + 4[2689] \\
-4[3689] & 0 & -2[3679] + [0489] - 4[3589] & 2[0189] + 4[2389] \\
-2[1489] - 4[5689] &  2[3679] - [0489] + 4[3589] & 0 & 4[3678] + 2[3579] + 2[2679] + 4[2589] \\
-2[3679] - [0489] - 4[2689] & -2[0189] - 4[2389]&  -4[3678] - 2[3579] - 2[2679] - 4[2589] & 0 \\
  \end{matrix} &\cdots \\
   \vdots  & \ddots \,\,
  \end{pmatrix}$
    \caption{}
    \label{fig:matrix}
\end{figure}

\section{Nanson's determinantal representation}\label{nanson}

In \cite{ns99}, Nanson gave a method to represent the resultant of four quaternary quadrics as the determinant of a $20\times20$ matrix. We recall the construction briefly here. 

Let $Q_1,\ldots,Q_4$ be four quadratics in four variables  $x_0,x_1,x_2,x_3$. By multiplying each quadric $Q_i$ with each of the variables in turn we get sixteen cubics $f_1,\ldots,f_{16}$. Furthermore, one  has four cubic polynomials named $f_{17},\ldots,f_{20}$ as the first partial derivatives of the determinant of the Jacobian matrix. Eliminating the twenty cubic monomials $x_0^3,x_0^2x_1,\ldots,x_3^3$ from the twenty cubics, Nanson proved that the determinant of the coefficient matrix $M$
$$M\cdot\left( \begin{array}{rrrr}
x_0^3\  \\
x_0^2x_1 \\
\vdots\ \ \\
x_3^3\\
\end{array}\right)= \left( \begin{array}{rrrr}
f_1  \\
f_2 \\
\vdots\ \\
f_{20}\\
\end{array}\right)$$
is the resultant of $Q_1,\ldots,Q_4$; a homogeneous polynomial of degree $8$ in the coefficients of each quadric. In particular, the discriminant of a general quaternary cubic form arises as the determinant of a $20\times 20$ matrix $M$ with $16$ rows of linear and $4$ rows of quadratics entries in $20$ variables.

One obtains a stratification of the locus of singular cubic surfaces by imposing rank condition on $M$. Explicitly, define $V_k \subset \PP^{19}$ to be the variety cut out by $(20-k+1)\times (20-k+1)$ minors of $M$. Then $V_1$ is the discriminant hypersurface. We would like to know about the geometry of the higher codimension strata; both of the strata themselves and the geometry of the cubic surfaces they contain. The latter is precisely the Question 6 from the \em cubic surfaces \em project \cite{bs}.\\

\noindent
\textit{Question 6: What is the geometric description of the cubic surfaces lying on $V_k$?}\\

\noindent
We will deal with this question in the next section.

\subsection{The locus of k-nodal cubics}\label{nodal}
Let $N_k\subset \PP^{19} $ denote the closure of the locus of $k$-nodal cubic surfaces. One can show via direct computation using Chern classes that $\codim N_k=k$ for $1\leq k\leq4$ and $N_k=\emptyset$ for $k>4$.

Note that $N_1$ is the entire discriminant hypersurface $V_1$, indeed since $V_1$ is an irreducible hypersurface, $\codim N_1 = 1$ and $N_1 \subseteq V_1$, we have $N_1 = V_1$. Moreover, \cite[Theorem I.1.5]{gkz} states that surfaces on the smooth locus of the discriminantal locus $V_1$ possess a unique singularity. Thus the general element of $V_1$ is a cubic surface with a single node.

Let $d_k=\deg N_k$ denote the degree of the locus $N_k$, interpreted as the number of $k$-nodal cubic surfaces in a general $k$-dimensional linear system of cubics. Rennemo \cite{rn17} has shown that the number of hypersurfaces with a certain type of singularity in an appropriate dimensional linear system is always governed by a polynomial in Chern classes of the associated line bundle. Relevant to the existence of the polynomial is the recent work of Y-J. Tzeng \cite{tz17}. In particular, Vaisenscher \cite{vs03} described these polynomials explicitly in a $k$-dimensional family of hypersurfaces with $k\leq 6$ ordinary double points. Evaluating his formulas in the particular case of cubic surfaces provides that $d_k$ is
$$32,\ 280,\ 800,\ 305,$$
for $ 1\leq k\leq4$, respectively. Now, a question one may ask towards understanding the geometry of such a locus is:\\

\noindent
\textit{Question 7: Can we find a low degree polynomial vanishing on $N_k$?}\\

In line with question 6, one can use a computer algebra system to build the matrix $M$ explicitly, however computing the determinant and minors of this matrix is still a heavy and unreachable task for current computer algebra systems. 

We take manageable steps towards this problem by considering the matrix modulo ideals generated by linear elements, of varying dimension. This corresponds to intersecting the varieties $V_k$ with low-dimensional linear subspaces. This sheds some light on geometry of cubic surfaces satisfying certain rank conditions.

As a first step, we consider the restriction to a general plane $\PP^2\subset \PP^{19}$.
\begin{proposition}
In the above notation, it holds that $\codim V_2=2$ and $\codim V_k\geq 3$ for $k\geq 3$. 
\end{proposition}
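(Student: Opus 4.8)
The plan is to reduce the statement to a computation on a general $2$-plane $\PP^2 \subseteq \PP^{19}$, using the standard fact that for a closed subvariety $Z \subseteq \PP^{19}$ and a general linear subspace $L$ of dimension $d$, the intersection $L \cap Z$ is empty when $\codim Z > d$ and is nonempty of pure dimension $d - \codim Z$ when $\codim Z \le d$. Taking $d = 2$, the two assertions become: (i) $V_2 \cap \PP^2$ is nonempty and $0$-dimensional, which forces $\codim V_2 = 2$; and (ii) $V_3 \cap \PP^2 = \emptyset$, which forces $\codim V_3 \ge 3$. Since the rank condition defining $V_k$ becomes more restrictive as $k$ grows, one has $V_k \subseteq V_3$ for every $k \ge 3$, so the bound $\codim V_k \ge 3$ for all $k \ge 3$ follows at once from (ii) by monotonicity of codimension under inclusion.

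To carry this out I would fix a general plane $\PP^2 \hookrightarrow \PP^{19}$, $[s:t:u] \mapsto [\ell_0 : \cdots : \ell_{19}]$ with the $\ell_i$ general linear forms in $s,t,u$, and substitute to obtain the restricted matrix $M|_{\PP^2}$: a $20 \times 20$ matrix over $\mathbb{K}[s,t,u]$ with $16$ rows of linear and $4$ rows of quadratic entries. I would then compute in Macaulay2 the ideal generated by the $18 \times 18$ minors (cutting out $V_3 \cap \PP^2$) and verify that its zero locus is empty, and the ideal generated by the $19 \times 19$ minors (cutting out $V_2 \cap \PP^2$) and verify that its zero locus is finite and nonempty. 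For the emptiness in (ii) a single explicit plane already suffices: since $\dim(L \cap V_3)$ is upper semicontinuous in $L$, an empty intersection for one chosen plane forces the generic intersection to be empty as well.

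For $\codim V_2 = 2$ the lower bound is cheap. We have $V_2 \subseteq V_1$, and $V_1$ is the irreducible discriminant hypersurface whose general point is a one-nodal cubic. Exhibiting a single cubic on $V_1$ at which $M$ has rank exactly $19$ shows, by lower semicontinuity of rank, that the generic rank along $V_1$ equals $19$; hence $V_2 \subsetneq V_1$ and $\codim V_2 \ge 2$. The upper bound $\codim V_2 \le 2$, equivalently the nonemptiness of the \emph{general} plane section, is the delicate point and the main obstacle. Semicontinuity controls the section dimension only from above under specialization, so a single random plane that happens to meet $V_2$ does not by itself prove that the generic plane meets it. I would close this gap in one of two ways: either perform the $19 \times 19$ minor computation with indeterminate plane coefficients, so that the plane is literally generic and the computed $0$-dimensional nonempty section equals the generic one; or exhibit an irreducible subvariety of dimension $17$ inside $V_2$, the natural candidate being the closure $N_2$ of the binodal locus, which gives $\dim V_2 \ge \dim N_2 = 17$ once one checks that the generic binodal cubic has corank at least $2$ in $M$.

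The expensive step is the minor computation itself: the restricted matrix has $\binom{20}{18}^2 = 36100$ distinct $18 \times 18$ minors, and although restriction to $\PP^2$ reduces the entries to polynomials in three variables, assembling and saturating these ideals is the practical bottleneck. Conceptually, the true subtlety is the asymmetry between the two bounds: emptiness for $k \ge 3$ is robust under any specialization, whereas the nonemptiness needed for $\codim V_2 = 2$ genuinely requires a generic plane (or an independent dimension-$17$ witness inside $V_2$), and handling this direction correctly is where the argument must be most careful.
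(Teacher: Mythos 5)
Your proposal takes essentially the same route as the paper: restrict to a general plane $\PP^2\subset\PP^{19}$, verify by a Macaulay2 minor computation that $V_2\cap\PP^2$ is a nonempty finite set while $V_k\cap\PP^2=\emptyset$ for $k\ge 3$, and conclude by the dimension formula. The genericity subtlety you flag for the nonemptiness direction (a single random plane meeting $V_2$ does not by itself certify $\codim V_2\le 2$) is a fair point that the paper's one-line proof does not address explicitly, so your suggested remedies are a refinement rather than a departure.
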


\begin{proof}
Our computations (see \cite[Nanson2]{bk}) show that $V_2 \cap \PP^2$ is a finite set of points and that $V_k \cap \PP^2 = \emptyset$ for $k\geq 3$. Thus by the dimension formula we can conclude that $\codim V_2=2$ and $\codim V_k\geq 3$ for $k\geq 3$.
\end{proof}

In this case, the discriminant is a plane curve $\Gamma=V_1\cap \PP^2=V(\dt\ M)\subset \PP^2$ of degree $32$ whose singular locus is of degree $520$.

It follows from \cite[Theorem I.1.5]{gkz} and Vaisenscher's computation of the degree of the binodal locus $N_2$ that this singular locus contains $280$ nodes corresponding to the binodal cubics. Moreover, one can check that the number of conditions imposed by a cusp on a cubic surface is the same as the number of the conditions for two nodes, so that the locus of binodal and cuspidal cubic surfaces are of codimension $2$ inside the space of cubic surfaces.  As a cusp contributes two times to the degree of the singular locus, we have $120=520-280/2$ remaining points in the singular locus corresponding to the cuspidal cubics (one can alternatively use the techniques of the Chern class computations for this enumerative problem to prove that in a general net of cubic surfaces there are exactly $280$ and $120$ fibers corresponding to the binodal and cuspidal cubic surfaces, respectively).

On the other hand, our computations show the intersection $V_2 \cap \PP^2$
consists of exactly 400 points. Comparing this to the equality $400=280+120$ suggests that these points are the singular points of the discriminant curve and indeed they do agree (\cite[Nanson2]{bk}).
We are lead to conjecture:


\begin{conjecture}\label{conj_N2}
The locus $V_2$ has at least two irreducible components parameterising binodal and cuspidal cubic surfaces. In particular, we have $N_2\subset V_2$ and the $19\times 19$ minors of $M$ vanish on $N_2$.
\end{conjecture}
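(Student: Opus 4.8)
The plan is to prove the two assertions by reducing both to a single rank statement: that the corank of $M$ at a cubic $F$ equals the total multiplicity of the common zeros of its four first partials $\partial F/\partial x_0,\dots,\partial F/\partial x_3$ (whenever this zero set is finite). Granting this, $V_2=\{\operatorname{corank} M\geq 2\}$ is exactly the locus of cubics whose partials meet with total multiplicity at least two. This splits into the locus of cubics with two distinct simple common zeros, namely the binodal cubics $N_2$, and the locus with a single length-two common zero, namely the cuspidal locus, which we denote $C$. Both loci have codimension $2$ (for $C$ this is the condition count recorded above), so each is a genuine top-dimensional component of $V_2$, yielding the claimed two components.

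A first concrete step, available unconditionally, pins down the \emph{only possible} components. Since $\partial_\xi(\det M)=\sum_{i,j}(\operatorname{adj} M)_{ji}\,\partial_\xi M_{ij}$ and the entries of $\operatorname{adj} M$ are, up to sign, the $19\times 19$ minors of $M$, every point of $V_2$ is a singular point of the discriminant hypersurface $V_1=V(\det M)$; hence $V_2\subseteq\operatorname{Sing}(V_1)$. By \cite[Theorem I.1.5]{gkz} the smooth locus of $V_1$ parameterises cubics with a single node, so $\operatorname{Sing}(V_1)$ consists, in codimension one inside $V_1$, precisely of the binodal and cuspidal cubics $N_2\cup C$. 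This already answers Question~6 set-theoretically and shows $V_2\subseteq N_2\cup C$, forcing $\codim V_2\geq 2$.

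The content of the conjecture is the reverse inclusion $N_2\cup C\subseteq V_2$, i.e.\ the statement that $\operatorname{corank} M\geq 2$ on these loci. For $N_2$ I would first pass to the dense open subset of cubics with two distinct nodes $p_1\neq p_2$; since vanishing of the $19\times 19$ minors is closed, it suffices to verify the rank drop there and then take closures. At such an $F$ each node $p_i$ is a simple isolated common zero of the four quadrics $Q_k=\partial F/\partial x_k$, and I would produce two independent vectors in $\coker M$, one localised at each $p_i$, by identifying the cokernel of Nanson's map with the relevant graded piece of the local algebras $\cO_{p_i}/(Q_1,\dots,Q_4)$ at the common zeros; distinct reduced points contribute independent local summands, forcing $\operatorname{corank}\geq 2$. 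The cuspidal case $C\subseteq V_2$ follows from the same principle, the unique common zero now being non-reduced of length two.

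The main obstacle is exactly the rank statement $\operatorname{corank} M=(\text{total multiplicity of common zeros})$ for Nanson's specific matrix. Unlike a Macaulay or B\'ezout resultant matrix, $M$ is assembled from the sixteen products $x_jQ_k$ together with four Jacobian partials, so the identification of $\coker M$ with the quotient algebra of the partials is not automatic and must be established for this construction; this is precisely the step the evidence only suggests. A complete proof should moreover recover the scheme structure of the minor ideal, in particular that $V_2$ is reduced of degree $280+120=400$ with no embedded or spurious components, which is the content of the numerical coincidence $400=280+120$ observed for the plane section $V_2\cap\PP^2$. Controlling these multiplicities, rather than merely the set-theoretic decomposition, is what keeps the statement at the level of a conjecture.
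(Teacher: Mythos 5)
First, be aware that this statement is a \emph{conjecture} in the paper: the authors do not prove it, and support it only with Macaulay2 computations (the plane section $V_2\cap\PP^2$ consists of exactly the $400=280+120$ points matching the expected counts of binodal and cuspidal fibres, and, in a later proposition, the $19\times 19$ minors are checked to vanish on the normal-form family $N_2^{\text{nf}}=\PP^{11}$ of binodal cubics). Your proposal does not close the gap either, and you say so yourself. Your one unconditional step --- Jacobi's formula $d(\det M)=\operatorname{tr}(\operatorname{adj}(M)\,dM)$ giving $V_2\subseteq\operatorname{Sing}(V_1)$ --- is correct and is a clean conceptual explanation of the numerical coincidence the authors observe on the plane section; but note it establishes the inclusion \emph{opposite} to the one the conjecture asserts. (Even there, the step from $V_2\subseteq\operatorname{Sing}(V_1)$ to $V_2\subseteq N_2\cup C$ needs more care: $\operatorname{Sing}(V_1)$ a priori has deeper strata, e.g.\ cubics with non-isolated singularities, which need not lie in the closure of the binodal or cuspidal loci, so you must separately exclude components of $V_2$ supported there.)

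The actual content of the conjecture is $N_2\cup C\subseteq V_2$, i.e.\ that $\operatorname{corank}M\geq 2$ on these loci, and your entire argument for this rests on the identity ``$\operatorname{corank}M$ equals the total multiplicity of the common zeros of the four partials,'' which you assert but do not prove. This is precisely the hard point, and it is genuinely delicate for Nanson's matrix: unlike a Macaulay-type resultant matrix, $M$ has sixteen rows spanning $(Q_1,\dots,Q_4)_3$, which is at most $16$-dimensional inside the $20$-dimensional space of cubic monomials, plus four rows built from the Jacobian determinant that are \emph{quadratic} in the coefficients; the identification of $\coker M$ with (a graded piece of) $\bigoplus_i\mathcal{O}_{p_i}/(Q_1,\dots,Q_4)$ is therefore not a formal consequence of any standard resultant-matrix theory and would itself require a regularity-type argument in degree $3$ for this specific construction. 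Until that identity (or at least the inequality $\operatorname{corank}M\geq\text{length}$) is established, your proposal is a plausible strategy rather than a proof, and the statement remains, as in the paper, a conjecture; the most useful salvageable piece is the Jacobi-formula observation $V_2\subseteq\operatorname{Sing}(V_1)$, which could be added to the paper's evidence as the rigorous half of the expected equality.
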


Let us present some further evidence for Conjecture \ref{conj_N2}. Let $X = V(F)$ with $F \in \mathbb{K}[x_0,x_1,x_2,x_3]_{3}$ be a cubic surface with a singularity. After a change of coordinates we may assume that the singular point of $X$ is at $p=(1:0:0:0)$. This gives us a normal form for cubic surfaces with a singularity at $p$:
\[F = F_3(x_1,x_2,x_3) + x_0F_2(x_1,x_2,x_3),\]
where $F_3$ is a cubic and $F_2$ is a conic in the variables $x_1,x_2,x_3$. Such equations make up a 15 dimensional linear subspace of $|\cO_{\PP^3}(3)| \cong \PP^{19}$ whose generic element is a smooth everywhere except $p$. Indeed if $F_3$ and $F_2$ are smooth, $X$ has a single nodal singularity at $p$.

Assuming that $X$ has two singularities, we may change coordinates such that they are at position $p = (1:0:0:0)$ and $q = (0:0:0:1)$. We may apply the same normal form as above, but with respect to $q$:
\[F = G_3(x_0,x_1,x_2) + x_3G_2(x_0,x_1,x_2),\]
where $G_3$ is a cubic and $G_2$ is a conic in the variables $x_0,x_1,x_2$. Combining the two normal forms we get
\begin{align*}
    F = a_0\cdot x_0x_1^2 & + a_1\cdot x_0x_1x_2 + a_2\cdot x_0x_1x_3 + a_3\cdot x_0x_2^2 + a_4\cdot x_0x_2x_3 + a_5\cdot x_1^3+\\& + a_6\cdot x_1^2x_2 + a_7\cdot x_1x_2^2 + a_8\cdot x_1x_2x_3 + a_9\cdot x_1^2x_3 + a_{10}\cdot x_2^3 + a_{11}\cdot x_2^2x_3. 
\end{align*}
Such equations make up a $\PP^{11} \subseteq \PP^{19}$, denoted by $N_2^{\text{nf}} = \PP^{11}$, whose general element is a cubic surface with nodes at $p$ and $q$. Using Macaulay2 one can check the following:

\begin{proposition}
The matrix of minors defining $V_2$ vanish on the binodal cubic surfaces in normal form. That is,
\[N_2^{\text{nf}} \subseteq V_2.\]
\end{proposition}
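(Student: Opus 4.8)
The plan is to prove the sharper linear-algebra statement that the rank of $M$ drops by at least two on $N_2^{\text{nf}}$, by exhibiting two explicit vectors in $\ker M$. Reading Nanson's defining relation $M\cdot(x_0^3,\dots,x_3^3)^{T}=(f_1,\dots,f_{20})^{T}$ as an identity of cubic forms and evaluating it at a point $p\in\PP^3$ gives $M\cdot v_p=(f_1(p),\dots,f_{20}(p))^{T}$, where $v_p\in\mathbb{K}^{20}$ is the vector of the twenty cubic monomials evaluated at $p$. Hence, if every $f_i$ vanishes at $p$, then $v_p\in\ker M$. The two nodes of the normal form are $p=(1:0:0:0)$ and $q=(0:0:0:1)$, and for distinct points their Veronese images $v_p,v_q$ are linearly independent, so it suffices to show that all twenty cubics $f_1,\dots,f_{20}$ vanish simultaneously at both $p$ and $q$. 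This forces $\dim\ker M\geq 2$, i.e. $\rk M\leq 18$, so every $19\times 19$ minor vanishes and $[F]\in V_2$.

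First I would dispose of the sixteen ``easy'' rows. Inspection of the monomial support shows that no monomial of the normal form is divisible by $x_0^2$ or by $x_3^2$; equivalently $F$ has degree at most one in each of $x_0$ and $x_3$. The first consequence is that $\partial F/\partial x_k$ vanishes at both $p$ and $q$ for all $k$, so $p$ and $q$ are singular points of $V(F)$ for \emph{every} $F\in N_2^{\text{nf}}$, not merely the generic one. Since the four quadrics are $Q_i=\partial F/\partial x_{i-1}$ and $f_1,\dots,f_{16}$ are the products $x_jQ_i$, each of these sixteen cubics vanishes at $p$ and $q$ immediately.

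The hard part is the four remaining rows $f_{17},\dots,f_{20}$, which are the partial derivatives of the Jacobian determinant of the $Q_i$ — that is, of the Hessian determinant $J=\det H$ with $H=\bigl(\partial^2 F/\partial x_i\partial x_j\bigr)$. I would compute $\partial J/\partial x_k$ via Jacobi's formula $\partial J/\partial x_k=\sum_{i,j}\operatorname{cof}(H)_{ij}\,\partial H_{ij}/\partial x_k$ and evaluate at $p$. Euler's relation at the singular point $p$ gives $H(p)\,p=0$, so $H(p)$ is singular. If $\rk H(p)\leq 2$, all $3\times 3$ minors and hence all cofactors vanish, so $\partial J/\partial x_k(p)=0$ trivially. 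If $\rk H(p)=3$, then $\ker H(p)=\langle p\rangle=\langle e_0\rangle$, and since $H$ is symmetric its adjugate is the rank-one matrix $c\,e_0e_0^{T}$ supported in the $(0,0)$ entry; thus only the term $\operatorname{cof}(H)_{00}\,\partial H_{00}/\partial x_k$ survives. But $H_{00}=\partial^2 F/\partial x_0^2\equiv 0$ because the normal form is linear in $x_0$, so this term vanishes too. Hence $\partial J/\partial x_k(p)=0$ for all $k$; the identical argument at $q$ uses $H_{33}=\partial^2 F/\partial x_3^2\equiv 0$.

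I expect the adjugate step to be the only delicate point: one must control exactly which cofactor entries can be nonzero, and it is precisely the normal-form constraints $H_{00}\equiv H_{33}\equiv 0$ — themselves forced by requiring two nodes at the coordinate points — that annihilate the surviving term. With both nodes handled, $v_p,v_q\in\ker M$ and the rank bound $\rk M\leq 18$ follows on all of $N_2^{\text{nf}}=\PP^{11}$. This yields a coordinate-free confirmation of the Macaulay2 verification and, more importantly, explains conceptually why the second rank condition holds: the two Veronese images of the nodes furnish the two independent kernel vectors.
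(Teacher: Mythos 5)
Your proposal is correct, but it takes a genuinely different route from the paper: the paper establishes this proposition purely by direct symbolic computation in Macaulay2 (reported to take about 35 hours and to be ``at the limit of what is possible to symbolically compute''), with no conceptual argument offered. Your argument replaces that computation with linear algebra. Reading Nanson's identity $M\cdot(x_0^3,\dots,x_3^3)^T=(f_1,\dots,f_{20})^T$ pointwise does show that the $3$-uple Veronese image $v_p$ of any common zero of $f_1,\dots,f_{20}$ lies in $\ker M$; the sixteen rows $x_jQ_i$ vanish at $p$ and $q$ because every member of the normal-form family (not only the generic one) is singular at both points, as no monomial divisible by $x_0^2$ or $x_3^2$ occurs; and for the four Hessian rows, Euler's relation $H(p)\,p=0$ forces the whole first row and column of $H(p)$ to vanish, so every cofactor except $\operatorname{cof}(H)_{00}$ dies, and the surviving term is killed by the normal-form identity $H_{00}=\partial^2F/\partial x_0^2\equiv 0$ (and $H_{33}\equiv 0$ at $q$). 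Since $v_p=e_1$ and $v_q=e_{20}$ in the monomial basis are independent, $\operatorname{rk}M\le 18$ on all of $N_2^{\mathrm{nf}}=\PP^{11}$ and every $19\times19$ minor vanishes, which is exactly the definition of $V_2$. What your approach buys is substantial: it is instantaneous rather than a 35-hour computation, it explains \emph{why} the rank drops (the two nodes furnish two kernel vectors), and it visibly generalises --- placing $k\le 4$ nodes at coordinate points forces the corresponding diagonal Hessian entries to vanish identically and yields $k$ independent kernel vectors, which would directly support the paper's subsequent conjecture that the $(20-k+1)\times(20-k+1)$ minors vanish on $N_k$. The only step deserving extra care in a write-up is the cofactor analysis (your case split on $\operatorname{rk}H(p)$ is correct, though the row/column vanishing of $H(p)$ handles both cases uniformly), and you should note explicitly that the kernel argument uses only that $p$ and $q$ are singular points, so the containment holds for the entire linear space $\PP^{11}$ and not merely its generic, genuinely binodal, members.
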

\begin{proof}
This is proved by direct computation. See \cite[Secion 3]{bk}.
\end{proof}

\begin{remark}
The computation took around 35 hours and is at the limit of what is possible to symbolically compute.
\end{remark}

\noindent
Now considering the drop by dimension when we restrict to minors of smaller size, the pattern suggests that:
\begin{conjecture}
For $1\leq k\leq4$, the $(20-k+1)\times(20-k+1)$ minors of $M$ vanish on $N_k$.
\end{conjecture}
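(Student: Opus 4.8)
The plan is to prove the containment $N_k\subseteq V_k$ directly, by producing $k$ linearly independent vectors in $\Ker M$ at a general $k$-nodal cubic. The starting point is Nanson's defining identity $M\cdot(x_0^3,\dots,x_3^3)^{T}=(f_1,\dots,f_{20})^{T}$. Specialising the four quadrics to the partials $Q_j=\partial F/\partial x_j$ and evaluating this identity of cubic forms at a point $p\in\PP^3$ yields $M\cdot\mathbf m(p)=(f_1(p),\dots,f_{20}(p))^{T}$, where $\mathbf m(p)=(p_0^3,\dots,p_3^3)^{T}$ is the image of $p$ under the $3$-uple Veronese. Thus any point $p\neq 0$ at which all twenty cubics $f_i$ vanish produces a nonzero kernel vector $\mathbf m(p)$ of $M$, and the problem reduces to showing that each node contributes such a point and that distinct nodes give independent vectors.

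First I would observe that the sixteen cubics $f_1,\dots,f_{16}$, being the products $x_l\,Q_j$, vanish at every singular point $p$ of $V(F)$, since there $Q_j(p)=\partial F/\partial x_j(p)=0$. The crux is the remaining four rows $f_{17},\dots,f_{20}$, the partial derivatives of $H:=\det J$, where $J$ is the Jacobian of $(Q_0,\dots,Q_3)$; in the discriminant specialisation $J=\operatorname{Hess}F$ is the Hessian and $H$ its determinant. I claim $\partial H/\partial x_l$ vanishes at every node $p$. By Jacobi's formula $\partial_l H=\operatorname{tr}\!\big(\operatorname{adj}(\operatorname{Hess}F)\cdot\partial_l(\operatorname{Hess}F)\big)$. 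At a node the Hessian has corank exactly one with kernel spanned by $p$ (the inclusion $p\in\Ker\operatorname{Hess}F(p)$ is the differentiated Euler relation $\operatorname{Hess}F(x)\cdot x=2\nabla F(x)$ at a singular point), so by symmetry $\operatorname{adj}(\operatorname{Hess}F(p))=\mu\,p\,p^{T}$ for some $\mu\neq0$. Hence $\partial_l H(p)=\mu\sum_{a,b}p_ap_b\,\partial_a\partial_b\partial_l F$, and since $\partial_l F$ is homogeneous of degree two this inner sum equals $2(\partial_l F)(p)=0$ because $p$ is singular. This establishes $\mathbf m(p)\in\Ker M$ for every node $p$, and in particular recovers the proven containment $N_2^{\mathrm{nf}}\subseteq V_2$ conceptually.

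It then remains to see that the vectors $\mathbf m(p_1),\dots,\mathbf m(p_k)$ attached to the $k$ distinct nodes of a general member of $N_k$ are linearly independent. A dependence $\sum_jc_j\mathbf m(p_j)=0$ is precisely the vanishing of the functional $g\mapsto\sum_jc_j\,g(p_j)$ on all cubics $g$, so independence is the statement that $k$ distinct points of $\PP^3$ impose independent conditions on cubics, which holds for all $k\le4$ since $3\ge k-1$. Consequently $\operatorname{corank}M\ge k$, i.e. the $(20-k+1)\times(20-k+1)$ minors of $M$ vanish, on the dense locus of cubics with $k$ distinct nodes; as this rank condition is closed, the vanishing extends to the closure $N_k$, giving $N_k\subseteq V_k$.

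The main obstacle I anticipate is not the kernel computation above but the bookkeeping needed to be certain it applies verbatim to Nanson's matrix: one must confirm that the four bordering rows are literally $\nabla_x\det J$ with no intervening base change, and that $Q_j=\partial_j F$ is indeed the specialisation used to build $M$ (the row-degree count in the text deserves a careful recheck against this identification). A secondary point is that the argument yields only the set-theoretic containment $N_k\subseteq V_k$ asserted by the conjecture; it says nothing about the irreducible components of $V_k$ from Conjecture \ref{conj_N2}, nor about whether the minors cut out $N_k$ scheme-theoretically, which would require analysing the cokernel of $M$ along $N_k$ rather than merely bounding its rank.
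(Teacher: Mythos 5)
Your argument is correct, and it does strictly more than the paper: the statement is presented there as a conjecture, supported only by computational evidence (the count $400=280+120$ for $V_2\cap\PP^2$, the $35$-hour Macaulay2 verification that $N_2^{\text{nf}}\subseteq V_2$, and the pattern of dimension drops), so there is no proof in the paper to compare against. Each step of your kernel construction checks out against the paper's description of $M$: the identity $M\cdot\mathbf{m}(x)=(f_1,\dots,f_{20})^{T}$ determines $M$ uniquely because the cubic monomials are linearly independent; the sixteen rows $x_lQ_j$ vanish at a singular point once $Q_j=\partial F/\partial x_j$; and for the rows $\partial_l\det(\operatorname{Hess}F)$, the differentiated Euler relation $\operatorname{Hess}F(p)\cdot p=2\nabla F(p)=0$ puts $p$ in the kernel of the Hessian, so at a node (corank exactly one, since the tangent cone is a nondegenerate conic) the adjugate is $\mu\,pp^{T}$ and Jacobi's formula reduces $\partial_l H(p)$ to $2\mu\,(\partial_lF)(p)=0$; when the corank is at least two the adjugate vanishes and the conclusion is automatic, so $\mathbf{m}(p)\in\Ker M$ at \emph{every} singular point. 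The independence of $\mathbf{m}(p_1),\dots,\mathbf{m}(p_k)$ for $k\le4$ distinct points is the standard fact that at most $d+1$ points impose independent conditions on degree-$d$ forms, and closedness of the rank condition passes the containment to the closure $N_k$. What your route buys is a short, computation-free proof of the conjecture that also subsumes the paper's Proposition $N_2^{\text{nf}}\subseteq V_2$; what it does not give, as you rightly note, is the finer structure of $V_k$ (e.g.\ the cuspidal component of $V_2$ predicted in Conjecture \ref{conj_N2}, for which one kernel vector per singular point yields only corank $\ge1$) or any scheme-theoretic statement. The one discrepancy you flag is real but harmless: since $\det M$ has degree $32$ and the first sixteen rows are linear in the coefficients of $F$, the last four rows must be quartic (degree one in the coefficients of each $Q_i$), consistent with your identification of them as $\nabla_x\det(\operatorname{Hess}F)$; the paper's description of them as ``quadratic'' appears to be a slip and does not affect your argument.
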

Proceeding with this approach, the next step would be the restriction to a general subspace $\PP^3\subset \PP^{19}$ for $V_3$. However, there are $36,100$ minors of Nanson's matrix $M$, and this becomes too big for computer systems.

\end{document}